\newif\ifcolorcomments
\newcommand{\allowcomments}[4]{
\newcommand{#1}[1]{\ifdraft{\ifcolorcomments{\textcolor{#4}{##1 --#3}}\else{\textsl{ ##1 \ --#3}}\fi}\else{}\fi}
}
\allowcomments{\comMR}{MR}{MR}{green}
\allowcomments{\comAB}{AB}{AB}{magenta}
\newcommand {\ignore}[1] {}
\def\bc{\begin{center}}
\def\ec{\end{center}}
\def\be{\begin{equation}}
\def\ee{\end{equation}}
\def\N{\mathbb N}
\def\R{\mathbb R}
\newtheorem{lem}{Lemma}[section]
\newtheorem{pro}[lem]{Proposition}
\newtheorem{thm}[lem]{Theorem}
\numberwithin{equation}{section}
\newif\ifdraft\drafttrue
\DeclareMathOperator{\Dist}{Dist}
\newcommand*{\myDots}{\ifmmode\mathellipsis\else.\kern-0.07em.\kern-0.07em.\fi}
\begin{document}


\title[ \MakeLowercase{d}-decaying Gauss like systems]{ Hausdorff dimension for the weighted products of multiple digits  in \MakeLowercase{d}-decaying Gauss like systems }


\author[A.~Bakhtawar]{Ayreena~Bakhtawar}
	\address{%
	Centro di Ricerca Matematica Ennio De Giorgi, Scuola Normale Superiore, Piazza dei Cavalieri 3,56126 Pisa, Italy\newline%
Institute of Mathematics, Polish Academy of Sciences, ul.  \'Sniadeckich 8, 00-656
Warszawa, Poland%
}
	\email{\href{ayreena.bakhtawar@sns.it}{ayreena.bakhtawar@sns.it},  \href{abakhtawar@impan.pl}{abakhtawar@impan.pl} }

\author[M.~Rams]{Micha\l~Rams}
	\address{   Institute of Mathematics, Polish Academy of Sciences, ul.  \'Sniadeckich 8, 00-656
Warszawa, Poland         }
	\email{\href{rams@impan.pl}{rams@impan.pl}}


\maketitle

  \renewcommand\thefootnote{}\footnote{The research of A.B. is partially supported by the research project ‘Dynamics and Information Research Institute - Quantum Information, Quantum Technologies’ within the agreement between UniCredit Bank and Scuola Normale Superiore (SNS), Pisa. The first author also acknowledges the support of the Centro di Ricerca Matematica Ennio de Giorgi, SNS and the Institute of Mathematics, Polish Academy of Sciences
for providing excellent working conditions. \\    
  The authors thank the ERD\"OS Center Budapest for their hospitality during the Workshop  on Dimension Theory of Fractals 2024, where some of the discussions were initiated.}

\begin{abstract}
We compute the Hausdorff dimension of sets defined by the growth of weighted products of multiple digits at arbitrary positions in $d$-decaying Gauss-like iterated function systems.
We provide the complete Hausdorff dimensional result for product of more than two digits, which was an open problem even for consecutive digits in the classical Gauss map and L\"{u}roth map. In our approach we do not need to assume the Bounded Distortion Property (BDP). 
\end{abstract}



\section{Introduction}

The study of infinite iterated function systems (iIFS) on the unit interval represents a rich intersection of ergodic theory, number theory, and fractal geometry. A particularly important class are the $d$-decaying systems, characterized by a polynomial decay rate in the derivatives of the generating maps. The foundational work in this area was established by Mauldin and Urbański \cite{MaUr_96, MU} on infinite conformal iterated function systems, and later specialized by Jordan and Rams \cite{JR12} in their study of $d$-decaying systems.

In this article, we work within the following framework.
Denote by $\N=\{1,2,\cdots,\}$ the set of  positive integers. Let $d>1$ be a real number. A family $\{T_{n}\}_{n\geq1}$  of $C^1$ expanding maps given by  $T_{n}:[0,1]\to [0,1]$ is called $d|$-decaying Gauss-like iterated function systems if the following properties are satisfied:

\begin{itemize}
\item[(i)] Open set condition: For any $i\neq j\in \N, $  $T_{i}(0,1)\cap T_{j}(0,1)=\emptyset;$
\item[(ii)] $\bigcup\limits{^{\infty}_{i=1}} T_{i}([0,1])=[0,1);$
\item[(iii)] if $T_{i}(x)<T_{j}(x)$ for all $x\in(0,1)$ then $i<j;$
\item [(iv)] there exists $m\in\N$ and  a real number $0<h<1$ such that for all $(a_{1},\cdots,a_{m})\in\N^{m}$ and $x\in[0,1]$
$$0<\left \vert  (T_{a_{1}}\circ\cdots\circ T_{a_{m}} )'        \right\vert\leq h <1; $$
\item [(v)] for any $\varepsilon  >0,$ we have constants  $K_{1}=K_{1}(\varepsilon  ),$ $K_{2}=K_{2}(  \varepsilon  )>0,$ such that for each $i\in\N$ there exists $\zeta_{i},\lambda_{i}$ such that for all $x\in[0,1],$
\begin{equation*}
\zeta_{i}\leq \left\vert T'_{i}(x)  \right\vert \leq \lambda_{i} \text{ and } \frac{K_{1}}{i^{d+\varepsilon }}\leq\zeta_{i}\leq\lambda_{i}\leq \frac{K_{2}}{i^{d- \varepsilon  }}.
\end{equation*}
\end{itemize}

This definition, a natural generalization of the Gauss map and its several modifications, follows the framework established by Jordan and Rams \cite{JR12} and further developed by Liao and Rams \cite{LR22}.  We emphasize that while some systems in this class satisfy the Bounded Distortion Property (BDP), in general it does not hold. However, all $d$-decaying Gauss-like iterated function systems satisfy certain weaker distortion property, see Proposition \ref{prop:dist}, and it is sufficient for our purposes: we prove first the existence of the geometric pressure and then our dimensional result using only Almost Tempered Distortion.

With the above definition, clearly there is a natural projection $\Pi:\N^{\N}\rightarrow[0,1]$ defined by 
$$  \Pi(\underline{a})=\lim_{n\to\infty}     T_{a_{1}}\circ\cdots\circ T_{a_{n}} (1)   $$
which gives the symbolic expansion $\underline{a}=(a_{1}(x), a_{2}(x),\cdots)   $ of any point $x\in[0,1].$ Such a symbolic expansion is uniquely defined except for countably many points. 
We will denote by $\Lambda=\Pi(N^{N}),$ the attractor of the IFS.
For any 
$(a_{1},a_{2},\cdots,a_{n})\in\N^{n},$  the set
$$ C_{n}[a_{1},\cdots,a_{n}]= T_{a_{1}}\circ\cdots\circ T_{a_{n}}([0,1]) $$ 
is called an $n$th level cylinder, it is the set of points in $[0,1]$ whose symbolic expansion begin with $a_{1},a_{2},\cdots, a_{n}.$ For simplicity denote $C[ a_{1}^{n}]:=C_{n}[a_{1},\cdots,a_{n}].$ Denote by $\left \vert I \right\vert$ the diameter of the interval $I.$
Let $D[a_1\ldots a_n]=\bigcup_{i\geq a_n} C[a_1\ldots a_{n-1} i].$

It can be derived from the conditions  (iv) and (v) in the definition of $d$-decaying Gauss like iterated function systems that 
\begin{equation}
K_{1}^{n}\prod_{i=1}^{n}a_{i}^{-d}\leq\left\vert C[a_{1}^{n}] \right\vert\leq   K_{2}^{n}\prod_{i=1}^{n}a_{i}^{-d}.   
\end{equation}

Both continued fraction and L\"{u}roth system are  special 2-decaying Gauss like iterated function systems. 

In the context of infinite iterated function systems, the digits $(a_n(x))_{n\geq 1}$ in the symbolic expansion of a point $x$ can attain arbitrarily large values. 
This naturally leads to the question of describing the sets of points for which the symbolic expansion grows at prescribed speed, see for example \cite{JR12, LR22}, or grows with prescribed speed over certain subsequence, which is the setting investigated in particular by \cite{CWW13,Z25,Z251}.

In the recent years in metric Diophantine approximation, the study of products of digits in the context of continued fraction has become an emerging topic , largely motivated by the work of Kleinbock and Wadleigh \cite{KlWa_18}. 
They showed that improvability of Dirichlet's Theorem is linked to the growth of products of consecutive continued fraction digits. 
This foundational result has since motivated a broad investigation into the metric properties of consecutive digit products in continued fractions and L\"{u}roth expansion with significant contributions in \cite{BF,BHKW,BAH23,HWX,KZ24,TZ24}.
In particular, the following sets were investigated in \cite{HWX} and \cite{BHKW}, respectively:
\[
E_1=\left\{x\in [0,1]; \prod_{m=1}^k a_{n+m}(x) \geq B^n \text{ for infinitely many }n\in\N\right\}
\]
and
\[
E_2=\left\{x\in [0,1]; a^{t_{0}}_n a_{n+1}^{t_{1}} \geq B^n \text{ for infinitely many }n\in\N\right\}.
\]

We consider, as the natural generalization, the limsup set of weighted products of multiple digits at arbitrary positions. To be more precise:
let $B\in (1,\infty)$, let $i_1,\ldots, i_k$ be a finite increasing sequence of integers, let $t_1,\ldots, t_k$ be positive reals. We denote
\[
E=\left\{x\in [0,1]; \prod_{m=1}^k a_{n+i_m}^{t_m}(x) \geq B^n \text{ for infinitely many }n\in\N\right\}.
\]
Moreover, to avoid the necessity of proving the results separately for Gauss map, L\"{u}roth map, and so on, we investigate this set in the general setting of $d$-decaying Gauss-like iterated function systems. This involves developing the distortion theory of $d$-decaying Gauss-like systems, which might be of independent interest. 

To state the main theorem, we introduce some necessary notations. 
Let
\[
S = \{(b_1,\ldots,b_k); b_i\geq 0 \ \forall i, \sum^{k}\limits_{i=1} b_i t_i=1\}
\]
be a simplex.
For $b\in S$ and $1\leq m\leq k$ we denote
\[
A_m(b,s) = (d-1)s b_{m}    + (ds-1) \sum^{m-1}\limits_{j=1} b_j,
\]
and let
\begin{equation} \label{eqn:abs}
A(b,s) = \max_{1\leq m\leq k} A_m(b,s).
\end{equation}

The function $A(b,s)$ is positive, continuous with respect to $b$ and continuous and increasing with respect to $s$. Let 
\[
A(s)= \min_{b\in S} A(b,s).
\]
This function $A(s)$ is also positive, continuous, and increasing.

Our main result is:
\begin{thm}\label{MT}
The Hausdorff dimension of the set $E$ is given by
\[
\dim_H E =s_{0} \text{ where } s_{0}:=\inf \{s; P(s) \leq A(s)\log B\},
\]
where $P$ denotes the pressure function defined in subsection \ref{psub}.
\end{thm}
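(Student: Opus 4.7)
Write $E=\limsup_n E_n$ where $E_n=\{x\in[0,1]: \prod_{m=1}^k a_{n+i_m}^{t_m}(x)\geq B^n\}$. The plan is to prove matching inequalities $\dim_H E\leq s_0$ and $\dim_H E\geq s_0$, via a covering calculation for the upper bound and a Cantor-set plus mass distribution construction for the lower bound.

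\textbf{Upper bound.} Fix $s>s_0$, so $P(s)<A(s)\log B$. Stratify $E_n$ by the orders of magnitude of the constrained digits: discretise the cone $\{b\in\R_{\geq 0}^k: \sum b_m t_m\geq 1\}$ into $O(n^k)$ cells of side $1/n$, and set $E_n(b)=\{x: a_{n+i_m}(x)\in[B^{nb_m},B^{n(b_m+1/n)}]\text{ for all } m\}$. For each type $b$ and each $m\in\{1,\ldots,k\}$, cover $E_n(b)$ by the gapped cylinder family $D[a_1,\ldots,a_{n+i_m-1},\lceil B^{nb_m}\rceil]$ subject to the preceding spiked digits $a_{n+i_l}$ ($l<m$) lying in their target dyadic bands. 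Using $|D[\bar a\,c]|\asymp c^{-(d-1)}|C[\bar a]|$, the almost tempered distortion of Proposition~\ref{prop:dist}, and the sum estimate $\sum_{a\in[c,2c]}a^{-ds}\asymp c^{1-ds}$, the $s$-Hausdorff sum of this cover factorises into a free pressure product $e^{P(s)(n+O(1))}$ times constrained-position factors, yielding
\[
\lesssim e^{nP(s)}\, B^{-nA_m(b,s)}.
\]
Optimising over $m$ gives a bound $\lesssim e^{n(P(s)-A(b,s)\log B)}$ per type, and summing over the $O(n^k)$ types contributes only a polynomial factor, so the total $s$-sum of $E_n$ is $\lesssim n^k e^{n(P(s)-A(s)\log B)}$. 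This is summable in $n$, and the covering diameters shrink by (iv), so the Hausdorff--Cantelli lemma yields $\dim_H E\leq s_0$.

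\textbf{Lower bound.} Fix $s<s_0$. Choose a minimiser $b^*\in S$ of $A(\cdot,s)$. A direct computation comparing $A_m$ and $A_{m+1}$ shows that the partial sums $\beta_m:=\sum_{m'\leq m}b^*_{m'}$ satisfy $\beta_m\leq A(s)/(ds-1)$ for every $m\leq k$ in the regime $ds>1$ (the regime $ds<1$ is simpler and imposes no constraint). Select a finite alphabet $\mathcal A\subset\N$ with restricted pressure $P_{\mathcal A}(s)>A(s)\log B$, and let $\nu$ denote the Gibbs measure for the potential $-s\log|T'|$ on $\mathcal A^{\N}$. Choose spike times $n_1\ll n_2\ll\cdots$ with $n_{j+1}/n_j\to\infty$, and build the Cantor set
\[
F=\bigl\{x: a_{n_j+i_m}(x)\in[B^{n_jb^*_m},2B^{n_jb^*_m}]\ \forall j,m \text{ with } b^*_m>0;\ a_l(x)\in\mathcal A\text{ otherwise}\bigr\}.
\]
Since $\sum_m b^*_m t_m=1$, we have $\prod_m a_{n_j+i_m}^{t_m}\geq B^{n_j}$, so $F\subset E$. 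Equip $F$ with the Bernoulli-like measure $\mu$ using $\nu$ at free positions and an $a^{-ds}$-weighted distribution at each spike digit. By the mass distribution principle, it suffices to verify $\liminf_{r\to 0}\log\mu(B(x,r))/\log r\geq s-\varepsilon$ at $\mu$-a.e.\ $x$. Checking this cylinder by cylinder: for depth $N$ interior to a free gap the ratio is $\geq s+(P_{\mathcal A}(s)-A(s)\log B)/\chi_\nu$ by the Gibbs identity $h_\nu-s\chi_\nu=P_{\mathcal A}(s)$; for $N=n_j+i_m$ the ratio tends to $(h_\nu+\beta_m\log B)/(\chi_\nu+d\beta_m\log B)\geq s$ by the partial-sum bound on $\beta_m$. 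Letting $\varepsilon\to 0$ and $s\uparrow s_0$ gives $\dim_H E\geq s_0$.

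\textbf{Main obstacle.} The delicate point is the uniform local-dimension lower bound across every cylinder scale --- especially those depths landing \emph{inside} a spike block $\{n_j+i_1,\ldots,n_j+i_k\}$, where both $\mu$-mass and diameter drop by factors exponential in $n_j$. The choice of $b^*$ dictated by the min-max characterisation $A(s)=\min_b\max_m A_m(b,s)$ is exactly what guarantees that at every intermediate spike index the ratio stays $\geq s$, precisely dualising the upper-bound step in which the maximum over $m$ selected the tightest cover. The sparsity $n_{j+1}/n_j\to\infty$ absorbs cross-block errors, and the argument proceeds without the Bounded Distortion Property, relying only on the almost tempered distortion of Proposition~\ref{prop:dist}.
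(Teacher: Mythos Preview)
Your plan is essentially the paper's: the upper bound via $D$-interval covers with the optimisation over the truncation index $m$, and the lower bound via a sparse-spike Cantor set equipped with an $s$-Gibbs/conformal measure and the mass distribution principle, with the min--max choice of $b^*\in S$ controlling the local dimension inside each spike block. The duality you identify between ``choose the best $m$'' in the cover and ``$A_m(b^*,s)\le A(s)$ for every $m$'' in the Cantor set is exactly the mechanism the paper uses.

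Two places in your sketch need tightening to match the paper's rigour. First, the cone $\{b\ge 0:\sum b_mt_m\ge 1\}$ is unbounded, so ``$O(n^k)$ cells of side $1/n$'' is not literally true; the paper sidesteps this by taking, for each $x\in E_n$, the \emph{first} index $r$ with $\prod_{m\le r}a_{n+i_m}^{t_m}\ge B^n$, which forces $(b_1,\ldots,b_{r-1})$ into the bounded simplex and then completes $b_r$ so that $b\in S$ --- this is what makes the number of types genuinely finite. Second, in the local-dimension check it is not enough to test cylinder depths $N=n_j+i_m$: the critical scale is the radius $r'\asymp r_{n_j+i_m-1}\cdot B^{(1-d)n_jb_m}$ at which a ball swallows the whole spike band $\bigcup_{w}C[\omega_1^{N-1}w]$, and it is at that scale that the quantity $A_m(b^*,s)=(d-1)sb_m^*+(ds-1)\beta_{m-1}$ (rather than $(ds-1)\beta_m$) appears in the denominator. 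Your $\beta_m$ bound handles the cylinder scale; the paper's Case~2 computation shows the band scale is also $\ge s$ precisely because $A_m(b^*,s)\le A(s)$.
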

We note that $1/d \leq s_0 <1$.

Computing the Hausdorff dimension of a set is typically accomplished in two step: obtaining the upper and lower bounds separately.
The rest of the paper is organized as follows. In Section \ref{pre}, we prove some technical results which are the key to the proof of our main theorem, including the distortion properties and the construction of pressure function.  Sections \ref{UBS} and \ref{LBS} are devoted to the  proofs of  the upper bound  and lower bound, respectively, for $\dim_H E $ in Theorem \ref{MT}.

\section{Preliminaries}\label{pre}

Throughout this section, we fix the following notation. Let $\N$ be the set of natural numbers and let $\Omega^{\N} = \N^\N$ denote the space of all infinite sequences, and  $\Omega^n =\N^n$ be the space of finite words of length $n.$

\subsection{Distortion}

We denote the distortion of a map $T_i$ by
\[
\Dist(T_i) := \sup_{x,y} \frac {|T_i'(x)|} {|T_i'(y)|}.
\]

There are several notions of distortion bounds applicable to iterated function systems:
\begin{itemize}
\item Bounded Distortion: there exists $K>1$ such that for all $n$, all $\omega \in\Omega^n$, and $x,y\in C[\omega]$
\[
K^{-1} \leq \frac {|T_\omega'(x)|} {|T_\omega'(y)|} \leq K,
\]
\item Tempered Distortion: there exists a function $\rho:\N\to [0,\infty), \frac 1n\rho(n)\to 0$, such that for all $n$, all $\omega \in\Omega^n$, and $x,y\in C[\omega]$
\[
e^{-\rho(n)} \leq \frac {|T_\omega'(x)|} {|T_\omega'(y)|} \leq e^{\rho(n)},
\]
\end{itemize}

We will add one more definition to the list: the iterated function system satisfies {\it Almost Tempered Distortion} if for every $\varepsilon>0$ there exists a function $\rho^{(\varepsilon)}:\N\to [0,\infty), \frac 1n\rho^{(\varepsilon)}(n)\to 0$, such that for all $n$, all $\omega \in\Omega^n$, and $x,y\in C[\omega]$
\[
e^{-\rho^{(\varepsilon)}(n)} |(T^n)'(x)|^{-\varepsilon} \leq \frac {|(T^n)'(x)|} {|(T^n)'(y)|} \leq e^{\rho^{(\varepsilon)}(n)}|(T^n)'(x)|^\varepsilon.
\]

\begin{pro} \label{prop:dist}
The Gauss-like $d$-decaying iterated function system satisfies Almost Tempered Distortion. The Gauss-like $d$-decaying iterated function system restricted to finitely many symbols (that is, with $\omega\in \{1,\ldots,M\}^n$ for some fixed $M$) satisfies Tempered Distortion. If in addition we assume $T$ is $C^{1+{\rm Holder}}$ then it satisfies Bounded Distortion.
\end{pro}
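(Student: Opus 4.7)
The plan is to start from the chain rule identity
\[
\log \frac{|T_\omega'(x)|}{|T_\omega'(y)|} = \sum_{k=1}^{n} \bigl( \log|T_{a_k}'(u_{k+1}(x))| - \log|T_{a_k}'(u_{k+1}(y))| \bigr),
\]
where $\omega=(a_1,\ldots,a_n)$, $T_\omega := T_{a_1}\circ\cdots\circ T_{a_n}$, and $u_j(z) := T_{a_j}\circ\cdots\circ T_{a_n}(z)$, and to estimate each summand by combining uniform continuity of $\log|T_i'|$ on $[0,1]$ with the pointwise size bounds from condition (v). Iterating the block contraction of condition (iv) gives $|u_{j}(x)-u_{j}(y)|\leq h^{\lfloor(n-j+1)/m\rfloor}$, so the two arguments of the $k$-th summand are exponentially close in the depth $n-k$.

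Given $\varepsilon>0$, I would fix a small auxiliary $\varepsilon'>0$ and a large threshold $M\in\N$ (both depending on $\varepsilon$) and split the sum into small-digit indices $S_M=\{k:a_k\leq M\}$ and large-digit indices $L_M=\{k:a_k>M\}$. For $k\in S_M$, the finitely many functions $\log|T_i'|$ with $i\leq M$ admit a common modulus of continuity $\tau_M$, so the $k$-th summand $\Delta_k$ satisfies $|\Delta_k|\leq \tau_M(h^{\lfloor(n-k)/m\rfloor})$; since each term tends to zero as $n-k\to\infty$, a Ces\`aro-type argument gives $\sum_{k\in S_M}|\Delta_k|=o(n)$ uniformly in $\omega$. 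For $k\in L_M$, condition (v) with parameter $\varepsilon'$ yields the trivial bound $|\Delta_k|\leq\log(K_2/K_1)+2\varepsilon'\log a_k$, so the large-digit contribution is at most $|L_M|\log(K_2/K_1)+2\varepsilon'\sum_k\log a_k$. Applying condition (v) again in the upper direction, $\sum_k\log a_k\leq(d-\varepsilon')^{-1}\bigl(\log(1/|T_\omega'(x)|)+n\log K_2\bigr)$ and $|L_M|\leq\sum_k\log a_k/\log M$, so the large-digit total is bounded by $\bigl(C\varepsilon'+C/\log M\bigr)\log(1/|T_\omega'(x)|)+O(n)$. Choosing first $\varepsilon'$ sufficiently small and then $M$ sufficiently large makes this at most $\varepsilon\log(1/|T_\omega'(x)|)+O(n)$; exponentiating then yields Almost Tempered Distortion.

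For the restriction to finitely many symbols, $L_M=\emptyset$ as soon as $M$ exceeds the alphabet size, so only the $o(n)$ uniform continuity estimate remains and Tempered Distortion follows. In the $C^{1+{\rm Holder}}$ strengthening the modulus $\tau_M$ can be taken of power form $\tau_M(\delta)\leq C_M\delta^\alpha$, turning $\sum_k \tau_M(h^{\lfloor(n-k)/m\rfloor})$ into a convergent geometric-type series whose sum is uniform in $n$, which yields Bounded Distortion. I expect the main technical obstacle to be the simultaneous calibration of the two small parameters $\varepsilon'$ and $M$ so that the resulting inequality matches the precise form of the almost-tempered definition, and to verify that the small-digit $o(n)$ estimate is genuinely uniform across all words $\omega\in\Omega^n$; once these are in place, the rest of the argument is a fairly mechanical bookkeeping exercise.
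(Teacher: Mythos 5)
Your proof takes essentially the same approach as the paper's: split the chain-rule sum into small-digit and large-digit index sets, bound the small-digit contribution sublinearly via contraction together with uniform continuity (compactness of the finite-alphabet domain), and control the large-digit contribution using the polynomial derivative bounds of condition (v) to produce the factor $|(T^n)'(x)|^{\pm\varepsilon}$. Your version fills in more explicit calibration details (the auxiliary $\varepsilon'$, the count $|L_M|\leq\sum\log a_k/\log M$, and the Ces\`aro argument), and the one small point to make rigorous is that your residual $O(n)$ term has a coefficient that vanishes as $\varepsilon'\to 0$, $M\to\infty$, and can be absorbed into $\varepsilon\log(1/|(T^n)'(x)|)$ since condition (iv) gives $\log(1/|(T^n)'(x)|)\gtrsim n$; with that noted, the argument matches the paper's.
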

\begin{proof}
The third assertion is well known, the second can be checked for example in \cite{JR}. We only need to check the first.

Fix $  \varepsilon   $. Fix some $M$ such that $K( \varepsilon   ) < M^  \varepsilon  $. We have
\[
\log \frac {|(T^n)'(x)|} {|(T^n)'(y)|} = \sum_{i=0}^{n-1} \log \frac {|T'(T^i x)|} {|T'(T^i y)|}
\]
and we will divide the sum into two subsums, over $i$ such that $\omega_i\leq M$ and over $i$ for which $\omega_i>M$. The former subsum is subexponentially bounded for the same reason as in \cite{JR}: the distances $|T^{n-i}x - T^{n-i} y|$ decrease to zero uniformly, $\log |T'|$ is a continuous function, and the union of domains of $T_i; i=1,\ldots,M$ is compact. 

In the latter subsum we have $\omega_i>M$, and hence 
\[
|\log |T'(T^i x)| - \log |T'(T^i y)|| \leq \log (K( \varepsilon  )^2 \omega_i^{2  \varepsilon  }) \leq   \log |T'(T^i x)|^{4 \varepsilon   /(d-2 \varepsilon  )}. 
\]
By choosing $\varepsilon$ sufficiently small, so is $4  \varepsilon   /(d-2 \varepsilon   )$, and we are done.
\end{proof}

\subsection{Pressure}\label{psub}



A central object in the thermodynamic formalism of dynamical systems is the topological pressure. In the context of conformal iterated function systems, it provides a powerful tool for analyzing the fractal geometry of the limit set. 
For infinite conformal iterated function systems (satisfying the Bounded Distortion Property), the existence and properties of the pressure function are well-established 
see Hanus-
Mauldin-Urba\'{n}ski \cite{HaMaUr_02}, Mauldin-Urba\'{n}ski \cite{MaUr_96}, or their monograph \cite{MU}. These classical results guarantee that the limit defining the pressure exists and yields a convex, analytic function.

However, since our framework encompasses general $d$-decaying Gauss-like systems where BDP may fail, we cannot directly apply these classical theorems. The existence of the pressure limit is no longer guaranteed a priori. Therefore, we must develop an independent approach to establish the foundational thermodynamics for our setting.

We now construct the pressure function and prove its key properties. For a real parameter $s,$ and a finite word $\omega=(\omega_{1},\cdots,\omega_{n})\in\Omega^{n},$ we define the potential 

\[
\phi_s(\omega) = s\log |C([\omega])|.\]
The $n$-th partition function is defined as 
\[
Z_n(s) = \sum_{|\omega|=n} e^{\phi_s(\omega)}=\sum_{|\omega|=n} |C([\omega])|^s. \]
and let 
\[P_n(s) = \frac 1n \log Z_n(s).
\]
Observe that $Z_n(s), P_n(s)$ exist and are finite for any $n$ when $s>1/d$. 
The topological pressure is given by the limit as follows:
\[P(s)=\lim_{n\to \infty}P_n(s) = \lim_{n\to\infty}\frac 1n \log Z_n(s).
\]

Our immediate goal is to prove the existence of this limit.
 We begin with establishing basic  properties of this topological pressure.
\begin{lem} \label{lem:convex}
The function $P_n$ is a convex, differentiable, and decreasing function of $s$ in the range $s\in (1/d,+\infty)$.
\end{lem}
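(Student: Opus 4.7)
The plan is to verify each property directly from the explicit formulas for $Z_n(s)$ and $P_n(s)=(1/n)\log Z_n(s)$, treating $n$ as a fixed positive integer. Monotonicity is essentially immediate: since $|C[\omega]|\leq 1$ for every $\omega\in\Omega^n$ (with strict inequality once $|\omega|\geq m$ by condition (iv)), each term $|C[\omega]|^s$ is a non-increasing function of $s$, so $Z_n$ and hence $P_n$ is decreasing on $(1/d,\infty)$. Convergence of $Z_n(s)$ for $s>1/d$ comes from the inequality immediately after condition (v) in the paper, which yields
\[
Z_n(s)\leq K_2^{ns}\Bigl(\sum_{a=1}^\infty a^{-ds}\Bigr)^n<\infty.
\]

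For differentiability I would apply term-wise differentiation, justified by the elementary bound $(-\log x)\,x^\delta\leq(e\delta)^{-1}$ for $x\in(0,1]$ and $\delta>0$. Given $s_0>1/d$, pick $\delta>0$ with $s_0-2\delta>1/d$; then for every $s\geq s_0-\delta$ the term-wise derivative of $|C[\omega]|^s$ obeys
\[
\bigl|\log|C[\omega]|\bigr|\cdot|C[\omega]|^{s}\leq \frac{1}{e\delta}\,|C[\omega]|^{s_0-2\delta},
\]
and the right-hand side is summable since $\sum_\omega|C[\omega]|^{s_0-2\delta}=Z_n(s_0-2\delta)<\infty$. By dominated convergence,
\[
Z_n'(s)=\sum_\omega \log|C[\omega]|\cdot|C[\omega]|^{s},
\]
so $P_n\in C^1(1/d,\infty)$; iterating the same majorant argument yields $P_n\in C^\infty$ on the entire interval.

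For convexity, introduce the probability measure $\mu_s(\omega)=|C[\omega]|^s/Z_n(s)$ on $\Omega^n$. A direct computation using the derivative formula above gives
\[
\frac{d^2}{ds^2}\log Z_n(s)=\mathbb{E}_{\mu_s}\bigl[(\log|C[\omega]|)^2\bigr]-\bigl(\mathbb{E}_{\mu_s}[\log|C[\omega]|]\bigr)^2=\operatorname{Var}_{\mu_s}(\log|C[\omega]|)\geq 0,
\]
so $\log Z_n$, and hence $P_n$, is convex. The only genuine obstacle in the whole argument is the interchange of differentiation with the infinite sum defining $Z_n(s)$; this is handled by the bound $(-\log x)x^\delta\leq(e\delta)^{-1}$, while the remaining steps are standard manipulations of the partition function.
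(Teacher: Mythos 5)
Your proof is correct and follows essentially the same route as the paper: both establish monotonicity from $|C[\omega]|\leq 1$, and both prove convexity by showing $\tfrac{d^2}{ds^2}\log Z_n\geq 0$, your variance identity $\operatorname{Var}_{\mu_s}(\log|C[\omega]|)$ being an equivalent rewriting of the paper's symmetrized expression $(S_0S_2-S_1^2)/S_0^2$. The one genuine addition on your side is the explicit dominated-convergence justification (via $(-\log x)x^\delta\leq(e\delta)^{-1}$) for differentiating the infinite series term by term, which the paper leaves implicit.
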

\begin{proof}
The function $P_n$ is decreasing in $s$ since $|C[\omega]|<1.$ It therefore suffices to prove convexity and differentiability.
 For this, we analyze the derivatives of $Z_{n}(s)$. Let us define the following sums:
 
 $S_0=Z_n(s),$\quad $S_1=\sum_{|\omega|=n} |C([\omega])|^s \log |C([\omega])|,$\quad \text{ and } $S_2= \sum_{|\omega|=n} |C([\omega])|^s (\log |C([\omega]))^2$.

We have
\[
\frac d {ds} \log S_0 = \frac {dS_0/ds} {S_0} = \frac {S_1} {S_0}
\]
and
\[
\frac {d^2} {ds^2} \log S_0 = \frac d {ds}  \frac {S_1} {S_0}= \frac {S_0 dS_1/ds - S_1 dS_0/ds} {S_0^2} = \frac {S_0 S_2 - S_1^2} {S_0^2}.
\]
Obviously, $S_0^2>0$. As for the numerator,
\begin{align*}
S_0 S_2 - S_1^2 &= \sum_{|\omega|=n} \sum_{|\tau|=n} |C([\omega])|^s |C([\tau])|^s ((\log |C([\omega])|)^2 + (\log |C([\tau])|)^2 -2 \log |C([\omega])| \log |C([\tau])|))\\& =  \sum_{|\omega|=n} \sum_{|\tau|=n} |C([\omega])|^s |C([\tau])|^s( \log |C([\omega])|- \log |C([\tau])|)^2 \geq 0.
\end{align*}
Thus, $\log Z_n(s)$ is a convex differentiable function of $s$. Consequently it follows that $P_{n}(s)$ is also convex differentiable.
\end{proof}

We now prove a uniform Lipschitz estimate. This uniformity is essential for the  proof of the existence of pressure function.
\begin{lem} \label{lem:unifcont} 
\text
For every $s>1/d,$ there exist $z=z(s)$ such that for every $\delta<(s-1/d)/2$ and for every $n,$ we have
\[
|P_n(s-\delta)-P_n(s)| \leq z\delta
\]
and
\[
|P_n(s+\delta)-P_n(s)| \leq z\delta.
\]
\end{lem}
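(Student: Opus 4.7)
The plan is to combine the convexity of $\log Z_n$ (Lemma~\ref{lem:convex}) with a uniform-in-$n$ pointwise bound on $P_n$, and then let Hölder's inequality (equivalently, log-convexity) convert that pointwise control into a Lipschitz estimate. The restriction $\delta<(s-1/d)/2$ will precisely guarantee that the auxiliary point we use stays a fixed positive distance away from the critical value $1/d$.

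I would start by showing that $P_n(s)$ is bounded above and below by constants depending only on $s$, not on $n$. Fixing a small auxiliary $\varepsilon_0 > 0$, condition~(v) together with the product structure of $T_\omega'$ gives the cylinder bound
\[
K_1(\varepsilon_0)^n\prod_{i=1}^n a_i^{-(d+\varepsilon_0)} \;\leq\; |C[a_1\ldots a_n]| \;\leq\; K_2(\varepsilon_0)^n\prod_{i=1}^n a_i^{-(d-\varepsilon_0)}.
\]
Raising to the $s$-th power and summing over $\omega\in\N^n$ factorises into $n$ copies of the series $\sum_a a^{-(d\pm\varepsilon_0)s}$, both of which converge once $\varepsilon_0$ is chosen small enough that $(d-\varepsilon_0)s>1$. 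Taking $\frac{1}{n}\log$ yields a uniform estimate of the form
\[
s\log K_1 + \log\sum_a a^{-(d+\varepsilon_0)s}\;\leq\; P_n(s)\;\leq\; s\log K_2 + \log\sum_a a^{-(d-\varepsilon_0)s}.
\]

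Now set $s' = (s+1/d)/2$. The hypothesis $\delta < (s-1/d)/2 = s-s'$ ensures $s-\delta\in(s',s)$, so we may write $s-\delta = \lambda s + (1-\lambda)s'$ with $1-\lambda = \delta/(s-s')$. Log-convexity of $Z_n$ (Lemma~\ref{lem:convex}) then gives
\[
0 \;\leq\; P_n(s-\delta)-P_n(s) \;\leq\; \frac{\delta}{s-s'}\bigl(P_n(s')-P_n(s)\bigr) \;\leq\; \frac{2C(s)}{s-1/d}\,\delta,
\]
where $C(s)$ is the constant one gets by applying the previous step at both $s$ and $s'$. This yields the first inequality with $z := 2C(s)/(s-1/d)$. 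For the $s+\delta$ side, convexity plus monotonicity force the secant slopes of $P_n$ to be non-positive and non-decreasing in the right endpoint, hence
\[
0\;\leq\; P_n(s)-P_n(s+\delta) \;\leq\; P_n(s-\delta)-P_n(s)\;\leq\; z\delta,
\]
so the same constant works for both estimates.

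The only step that requires genuine care is the first one. Because condition~(v) involves the $\varepsilon$-perturbed powers $(d\pm\varepsilon)$, one has to check that a single choice of $\varepsilon_0$ makes \emph{both} series $\sum_a a^{-(d-\varepsilon_0)s}$ and $\sum_a a^{-(d-\varepsilon_0)s'}$ convergent, which is possible exactly because $s-1/d>0$. Once that is arranged there is no analytic subtlety: log-convexity of $Z_n$ converts the fixed parameter gap $s-s' = (s-1/d)/2$ directly into a linear factor of $\delta$, with all $n$-dependence absorbed into the constant. In particular, no distortion theory beyond the elementary cylinder bound is needed for this lemma.
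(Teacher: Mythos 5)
Your proof is correct and follows essentially the same route as the paper's: both introduce the auxiliary midpoint $s'=(s+1/d)/2$, establish $n$-independent bounds on $P_n$ at $s'$ and $s$ via condition~(v) and summability of $\sum_a a^{-(d\mp\varepsilon_0)t}$ for $t\geq s'$, and then use convexity and monotonicity of $P_n$ (Lemma~\ref{lem:convex}) to convert the fixed parameter gap $s-s'=(s-1/d)/2$ into a Lipschitz bound. The only cosmetic difference is that you phrase the convexity step through convex-combination weights while the paper phrases it through bounds on secant slopes; these are equivalent.
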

\begin{proof}
Fix $s>1/d$. Let $s_0= (s+1/d)/2$ and $\varepsilon<d-1/s_0$. We have
\[
P_n(s_0) \leq \log \sum_i (K_2(\varepsilon) i^{-d+\varepsilon})^{s_0} =: R_1
\]
and
\[
P_n(s) \geq \log \sum_i (K_1(\varepsilon) i^{-d-\varepsilon})^s =: R_2.
\]
for every $t\in [s_0,s]$ we have 
\[
P_n(t)-P_n(s) \leq (s-t) \frac {R_1-R_2}{s-s_0}
\]
and for every $t>s$ we have
\[
P_n(s)-P_n(t) \leq (s-t) \frac {R_1-R_2}{s-s_0}.
\]
As the function $P_n$ is decreasing, this implies the assertion.
\end{proof}

\begin{pro}\label{prop:press}
The limit 
\[
P(s) = \lim_{n\to\infty} P_n(s)
\]
exists for every $s>1/d$.
\end{pro}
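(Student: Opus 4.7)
The plan is to derive an ``almost-subadditive'' inequality for $a_n := \log Z_n(s)$ and conclude via a Fekete-type argument, using Lemma~\ref{lem:unifcont} to absorb the exponent shift forced on us by Almost Tempered Distortion (rather than Bounded Distortion).

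\emph{Step 1: approximate submultiplicativity of $Z_n(s)$.} For $\omega_1\in\Omega^m$ and $\omega_2\in\Omega^n$, the identity $C[\omega_1\omega_2]=T_{\omega_1}(C[\omega_2])$ together with the Mean Value Theorem give $|C[\omega_1\omega_2]|=|T_{\omega_1}'(\eta)|\cdot|C[\omega_2]|$ for some $\eta\in[0,1]$, while $|C[\omega_1]|=|T_{\omega_1}'(\xi)|$ for some $\xi\in[0,1]$. Applying Almost Tempered Distortion (Proposition~\ref{prop:dist}) to compare $|T_{\omega_1}'(\eta)|$ with $|T_{\omega_1}'(\xi)|$ yields
\[
|C[\omega_1\omega_2]|^s \leq e^{s\rho^{(\varepsilon)}(m)}\,|C[\omega_1]|^{s(1-\varepsilon)}\,|C[\omega_2]|^s,
\]
and summing over $\omega_1,\omega_2$ decouples the two sums into
\[
Z_{m+n}(s) \leq e^{s\rho^{(\varepsilon)}(m)}\,Z_m(s(1-\varepsilon))\,Z_n(s).
\]
Invoking Lemma~\ref{lem:unifcont} (with $\varepsilon$ small enough that $s\varepsilon<(s-1/d)/2$) to bound $\log Z_m(s(1-\varepsilon))\leq \log Z_m(s)+mzs\varepsilon$ then gives
\[
a_{m+n} \leq a_m + a_n + s\rho^{(\varepsilon)}(m) + mzs\varepsilon.
\]

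\emph{Step 2: Fekete-type iteration.} Fixing $n$ and iterating the previous inequality with blocks of length $n$ (always taking the ``first block'' to be of length $n$, so that the error at each step is $s\rho^{(\varepsilon)}(n)+nzs\varepsilon$), for $N=qn+r$ with $0\leq r<n$ one obtains
\[
a_N \leq q\,a_n + a_r + q\bigl(s\rho^{(\varepsilon)}(n)+nzs\varepsilon\bigr).
\]
Dividing by $N$ and letting $q\to\infty$ with $n$ fixed (so $r$ stays bounded and $a_r/N\to 0$) yields, for every $n$,
\[
\limsup_{N\to\infty} P_N(s) \leq P_n(s) + \frac{s\rho^{(\varepsilon)}(n)}{n} + zs\varepsilon.
\]
Taking $\liminf_{n\to\infty}$ of the right-hand side and using $\rho^{(\varepsilon)}(n)/n\to 0$,
\[
\limsup_{N\to\infty} P_N(s) \leq \liminf_{N\to\infty} P_N(s) + zs\varepsilon.
\]
Since $z=z(s)$ is independent of $\varepsilon$ and $\varepsilon>0$ is arbitrary, $\limsup$ and $\liminf$ must coincide, which gives the existence of the limit $P(s)$.

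The main obstacle is the failure of Bounded Distortion: Almost Tempered Distortion only controls $|T_{\omega_1}'(\eta)|/|C[\omega_1]|$ up to a small power of $|C[\omega_1]|$ itself, which forces the exponent shift $s\mapsto s(1-\varepsilon)$ inside the partition function instead of a clean multiplicative constant. Lemma~\ref{lem:unifcont} is precisely what converts this shift into an additive error linear in $m$ with small coefficient $zs\varepsilon$, which becomes innocuous in the limit as $\varepsilon\to 0$. No superadditive companion estimate is needed, since the trivial inequality $\liminf\leq\limsup$ closes the argument.
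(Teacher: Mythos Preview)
Your proof is correct and follows essentially the same approach as the paper: both use Almost Tempered Distortion to get an approximate sub/super-multiplicativity of $Z_n$ with an exponent shift $s\mapsto s(1\mp\varepsilon)$, then invoke Lemma~\ref{lem:unifcont} to convert that shift into an additive $O(\varepsilon)$ error, and finish with a Fekete-type argument. Your version is slightly more economical in that you only use the upper (submultiplicative) half of the distortion estimate and close with the trivial inequality $\liminf\leq\limsup$, whereas the paper uses both halves; you also handle the remainder term $r$ in $N=qn+r$ explicitly, which the paper leaves implicit by working only along the subsequence $m=kn$.
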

\begin{proof}
Fix $s>1/d$. Fix $\varepsilon < s-1/d$ and some large $n$. We have
\[
Z_{kn}(s) = \sum_{|\omega^{(1)}|=n} \ldots \sum_{|\omega^{(k)}|=n} |C([\omega^{(1)} \ldots \omega^{(k)}])|^s.
\]
Thus, by the Almost Tempered Distortion property,
\[
Z_{kn}(s) \leq e^{k \rho^{(\varepsilon)}(n)} \cdot \sum_{\omega^{(1)}} |C([\omega^{(1)}])|^{s(1-\varepsilon)} \cdot\ldots\cdot \sum_{\omega^{(k)}} |C([\omega^{(k)}])|^{s(1-\varepsilon)}
\]
and
\[
Z_{kn}(s) \geq e^{-k \rho^{(\varepsilon)}(n)} \cdot \sum_{\omega^{(1)}} |C([\omega^{(1)}])|^{s(1+\varepsilon)} \cdot\ldots\cdot \sum_{\omega^{(k)}} |C([\omega^{(k)}])|^{s(1+\varepsilon)}.
\]
This implies
\[
\limsup_{m\to\infty} \frac 1m Z_m(s) \leq \frac 1n \rho^{(\varepsilon)}(n) + P_n(s(1-\varepsilon))
\]
and
\[
\liminf_{m\to\infty} \frac 1m Z_m(s) \geq - \frac 1n \rho^{(\varepsilon)}(n) + P_n(s(1+\varepsilon)).
\]
Therefore, by Lemma \ref{lem:unifcont}
\[
\limsup_{m\to\infty} \frac 1m Z_m(s) -  \liminf_{m\to\infty} \frac 1m Z_m(s)\leq \frac 2n  \rho^{(\varepsilon)}(n) + P_n(s(1-\varepsilon)- P_n(s(1+\varepsilon)\leq  \frac 2n  \rho^{(\varepsilon)}(n) + 2z\varepsilon
\]
and the right hand side can be made arbitrarily small by choosing sufficiently small $\varepsilon$ and then sufficiently large $n$.
\end{proof}

The pressure function $P(s),$ whose existence is guaranteed by Proposition~\ref{prop:press}, inherits the fundamental properties of the approximants $P_n (s)$ and plays a central role in the dimension theory of the attarctor $\Lambda$ of the iterated function system.
\begin{pro}[Properties of Pressure Function]\label{PPF}
The pressure function $P: (1/d, \infty) \to \mathbb{R}$ satisfies the following:
\begin{itemize}
\item[i.]  $ P(s)$ is strictly decreasing and strictly convex.
\item[ii.] $P$ is finite for $s>1/d.$
\item[iii.]    $\lim_{s \to \infty} P(s) = -\infty$.
\item[iv.] P(1)=0.
\end{itemize}
\end{pro}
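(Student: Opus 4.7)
The plan is to obtain each of the four properties by passing the corresponding finite-level statement through Proposition~\ref{prop:press}, relying on Lemmas~\ref{lem:convex} and~\ref{lem:unifcont} together with the IFS axioms. Three of the four items come out quickly; strict convexity is the delicate one.

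For item (ii), finiteness is already built into Lemma~\ref{lem:unifcont}, which supplies uniform bounds $R_2\le P_n(s)\le R_1$ on compact subintervals of $(1/d,\infty)$, and this passes to the limit by Proposition~\ref{prop:press}. For item (iv), IFS axioms (i) and (ii) force $\{C[\omega]\}_{|\omega|=n}$ to be a partition of $[0,1)$ up to a countable set, so $Z_n(1)=\sum_\omega |C[\omega]|=1$ for every $n$; hence $P_n(1)=0$ and $P(1)=0$. For the strict-decrease half of item (i) and for item (iii), I invoke IFS axiom (iv), which yields $|C[\omega]|\le h^{\lfloor n/m\rfloor}$ uniformly in $\omega\in\Omega^n$. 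Then for $1/d<s_1<s_2$,
\[
Z_n(s_2)=\sum_\omega |C[\omega]|^{s_1}|C[\omega]|^{s_2-s_1}\le h^{\lfloor n/m\rfloor(s_2-s_1)}Z_n(s_1),
\]
so $P_n(s_2)\le P_n(s_1)+(s_2-s_1)(\lfloor n/m\rfloor/n)\log h$, and in the limit $P(s_2)\le P(s_1)+(s_2-s_1)(\log h)/m$. Since $\log h<0$ this gives strict decrease, and sending $s_2\to\infty$ with $s_1$ fixed yields item (iii).

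Convexity of $P$ follows since each $P_n$ is convex (Lemma~\ref{lem:convex}) and pointwise limits of convex functions are convex. The main obstacle is \emph{strict} convexity, because a pointwise limit of strictly convex functions need not be strictly convex. The strategy I would pursue is to rule out any affine segment of $P$. If $P$ were affine on some $[s_1,s_2]\subset(1/d,\infty)$, then the identity
\[
P_n''(s)=\frac{1}{n}\cdot\frac{S_0S_2-S_1^2}{S_0^2}=\frac{1}{n}\,\mathrm{Var}_{\mu_n^{(s)}}\bigl(\log|C[\omega]|\bigr),
\]
read off from Lemma~\ref{lem:convex} with the Gibbs-type probability $\mu_n^{(s)}(\omega)=|C[\omega]|^s/Z_n(s)$, would require this variance to be $o(n)$ uniformly on $[s_1,s_2]$. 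But IFS axiom (v) produces cylinders of drastically different size (e.g.\ $\omega=1^n$ versus $\omega$ containing a single very large symbol), and the Almost Tempered Distortion of Proposition~\ref{prop:dist} allows one to compare $\mu_n^{(s)}$ on long subblocks with a block-product measure, for which this variance grows linearly in $n$. Turning this heuristic into a clean contradiction is where the real technical work lies.
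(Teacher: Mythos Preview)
The paper states this proposition without proof, so there is nothing to compare against directly; your proposal in fact supplies what the paper omits. Your arguments for items (ii), (iii), (iv), for the strict decrease half of (i), and for (non-strict) convexity are all correct and economical: the use of axiom (iv) to extract the uniform contraction bound $|C[\omega]|\le h^{\lfloor n/m\rfloor}$ is exactly the right move for strict decrease and (iii), and the partition argument for $Z_n(1)=1$ is clean.

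The gap you yourself flag---strict convexity---is genuine and not yet closed. Your heuristic is sound: $P_n''(s)=\tfrac1n\mathrm{Var}_{\mu_n^{(s)}}(\log|C[\omega]|)$, and one wants this variance to stay bounded away from zero in $n$. But under only Almost Tempered Distortion (as opposed to BDP) the block-decoupling you invoke introduces an extra $|C[\omega]|^{\pm\varepsilon}$ factor, and it is not automatic that this perturbation does not wash out the linear growth of the variance. A more robust route is to work at the level of the Cauchy--Schwarz gap: already for $n=1$ axiom (v) forces the first-level cylinders $|C[i]|\asymp i^{-d}$ to have genuinely different sizes, so $Z_1(\tfrac{s_1+s_2}2)<Z_1(s_1)^{1/2}Z_1(s_2)^{1/2}$ strictly; one can then use the approximate submultiplicativity $Z_{kn}(s)\le e^{k\rho^{(\varepsilon)}(n)}Z_n(s(1-\varepsilon))^k$ established in the proof of Proposition~\ref{prop:press} (applied at three parameter values) to push this to a linear-in-$n$ gap for $P_n$, after sending $\varepsilon\to 0$ using Lemma~\ref{lem:unifcont}. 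That said, a scan of the remainder of the paper shows that strict convexity is never actually invoked---only strict monotonicity and $P(1)=0$ are used downstream---so the missing step affects the proposition as stated rather than the main theorem.
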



We now consider finite subsystems. For $M\in\N,$ let $\Omega_{M}^{n}=\{1,\cdots,M\}^{n}.$ Define the  
\[P_{M}(s)=\lim_{n\to \infty}P_{M,n}(s) = \lim_{n\to\infty}\frac 1n \log Z_{M,n}(s), \quad \text{ where }
Z_{M,n}(s)=\sum_{\omega\in\Omega_{M}^{n}}|C[\omega]|^s.
\]
Therefore, by Proposition \ref{prop:press}, the limit in the definition of pressure function $P_{M}$ is guarantee. 

The following proposition states that in $d$-decaying Gauss-like iterated function systems, the pressure function has a continuity property when the full system is approximated by its finite-alphabet subsystems.
\begin{pro}\label{CA}
For any $s > 1/d$, the pressure is continuous with respect to the alphabet, i.e.,
\begin{equation*}
P(s)  = \lim_{M \to \infty} P_M(s).
\end{equation*}
\end{pro}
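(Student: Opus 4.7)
The inequality $P_M(s)\leq P(s)$ is immediate, since $\Omega_M^n\subset \Omega^n$ implies $Z_{M,n}(s)\leq Z_n(s)$ for every $n$, and passing to $(1/n)\log$ and $n\to\infty$ preserves the inequality. The substance of the proposition is therefore the opposite direction $\liminf_{M\to\infty} P_M(s) \geq P(s)$. My plan is to mimic the argument of Proposition \ref{prop:press} but carried out inside a finite alphabet, and then exchange the limits in $M$, $n$, and $\varepsilon$ in the right order.

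Fix $s>1/d$ and a small $\varepsilon>0$ (small enough that $s(1+\varepsilon)>1/d$ still holds). Fix any $n\in\N$. Writing each word of length $kn$ in $\Omega_M$ as a concatenation $\omega^{(1)}\ldots\omega^{(k)}$ of blocks of length $n$ in $\Omega_M^n$, the Almost Tempered Distortion bound (Proposition \ref{prop:dist}) applied inside $\{1,\ldots,M\}^{kn}$ gives, exactly as in the proof of Proposition \ref{prop:press},
\[
Z_{M,kn}(s) \;\geq\; e^{-k\rho^{(\varepsilon)}(n)}\bigl(Z_{M,n}(s(1+\varepsilon))\bigr)^k.
\]
Taking $\frac{1}{kn}\log$ and letting $k\to\infty$ (note that $P_M(s)$ exists by the same proof that gave existence in Proposition \ref{prop:press}, applied to the subsystem) yields
\[
P_M(s) \;\geq\; -\tfrac{1}{n}\rho^{(\varepsilon)}(n) + P_{M,n}(s(1+\varepsilon)).
\]

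Now I would fix $n$ and let $M\to\infty$. For fixed $n$ the sum $Z_n(s(1+\varepsilon)) = \sum_{\omega\in\Omega^n}|C[\omega]|^{s(1+\varepsilon)}$ is a convergent series of nonnegative terms (convergent because $s(1+\varepsilon)>1/d$, using (1.1) and $\sum_i i^{-d s(1+\varepsilon)}<\infty$), so its partial sums $Z_{M,n}(s(1+\varepsilon))$ converge to it. Hence $P_{M,n}(s(1+\varepsilon))\to P_n(s(1+\varepsilon))$, and
\[
\liminf_{M\to\infty} P_M(s) \;\geq\; -\tfrac{1}{n}\rho^{(\varepsilon)}(n) + P_n(s(1+\varepsilon)).
\]

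Finally I would let $n\to\infty$: the first term vanishes by the defining property of $\rho^{(\varepsilon)}$ and the second tends to $P(s(1+\varepsilon))$ by Proposition \ref{prop:press}, giving $\liminf_{M\to\infty} P_M(s)\geq P(s(1+\varepsilon))$. Letting $\varepsilon\to 0$ and using that $P$ is continuous at $s$ (inherited from the uniform Lipschitz estimate in Lemma \ref{lem:unifcont} by passing to the pointwise limit) yields $\liminf_{M\to\infty} P_M(s)\geq P(s)$, which combined with the trivial upper bound completes the proof. The only place that needs a little care is the exchange of limits $M\to\infty$ and $n\to\infty$; the above order avoids this issue by first driving $M\to\infty$ with $n$ (and $\varepsilon$) frozen, then sending $n\to\infty$ and $\varepsilon\to 0$.
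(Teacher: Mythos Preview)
Your proof is correct and follows essentially the same route as the paper: obtain $P_M(s)\geq -\tfrac{1}{n}\rho^{(\varepsilon)}(n)+P_{M,n}(s(1+\varepsilon))$ by reusing the block argument from Proposition~\ref{prop:press} inside the finite alphabet, then send $M\to\infty$ (using $Z_{M,n}\to Z_n$ as a finite sum of convergent series), then $n\to\infty$, then $\varepsilon\to 0$. You spell out a couple of points the paper leaves implicit (why $P_{M,n}\to P_n$ for fixed $n$, and the continuity of $P$ via Lemma~\ref{lem:unifcont}), but the structure is identical.
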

\begin{proof}
By definition of Pressure function $P_{M,n}(s)\leq P_{n}(s).$ Hence 
$\lim_{n\to\infty}P_{M,n}(s)\leq \lim_{n\to\infty}P_{n}(s).$ This implies  $\lim_{M \to \infty}P_M(s)\leq P(s),$ for every $M.$ Thus $$\lim_{M \to \infty}P_M(s)\leq P(s).$$

To prove the reverse inequality, fix some $n$ and $\varepsilon$ and observe that for every $M$ we have
\[
\liminf_{m\to\infty} \frac 1m Z_{M,m}(s) \geq -\frac 1n \rho^{(\varepsilon)}(n) +P_{M,n}(s(1+\varepsilon)).
\]
Indeed, we have proven this inequality for $Z_m$ in the course of proving Proposition \ref{prop:press}, and the same proof works for every $Z_{M,m}$. This implies
\[
\lim_{M\to\infty} P_M(s) = \lim_{M\to\infty}  \lim_{m\to\infty} \frac 1m Z_{M,m}(s)\geq  -\frac 1n \rho^{(\varepsilon)}(n)+\lim_{M\to\infty} P_{M,n}(s(1+\varepsilon))=  -\frac 1n \rho^{(\varepsilon)}(n) + P_n(s(1+\varepsilon)).
\]
Keeping $\varepsilon$ fixed and passing with $n$ to infinity we get
\[
\lim_{M\to\infty} P_M(s) \geq \lim_{n\to\infty}  -\frac 1n \rho^{(\varepsilon)}(n) + \lim_{n\to\infty}  P_n(s(1+\varepsilon))= P(s(1+\varepsilon)).
\]
Passing now with $\varepsilon$ to zero, we get
\[
\lim_{M\to\infty} P_M(s) \geq \lim_{\varepsilon\searrow 0}  P(s(1+\varepsilon))=P(s).
\]
Combining both inequalities we obtained the required result.

\end{proof}

We now connect the pressure to the critical exponent \( s_0 \) from our main theorem. Let $ M \in \mathbb{N}.$ Define
\begin{align*}
s_0 &= \inf \{ s: P(s) \leq A(s) \log B \},\\
s_{0,n} &= \inf \{ s: P_{n}(s) \leq A(s) \log B \},\\
s_0(M) &= \inf \{ s: P_M(s) \leq A(s) \log B \},\\
\end{align*}
Thus it follows from the definition of the pressure function and by Proposition \ref{CA} that
\begin{lem} For any $M\in\N,$ we have
\begin{equation*}
 \lim_{n\to\infty}  s_{0,n}=s_{0},       \quad    \lim_{M \to \infty} s_0(M) = s_0.
\end{equation*}
Moreover, $s_0$ is continuous and strictly decreasing as a function of $B > 1$.
\end{lem}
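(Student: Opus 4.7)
I would introduce the test function $\phi_B(s) := P(s) - A(s)\log B$ on $(1/d,\infty)$, show that $s_0$ is its unique zero (and similarly for $s_{0,n}$, $s_0(M)$), and then transfer this characterization to the approximants via the pointwise convergence results already established in the preceding propositions.

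By Proposition \ref{PPF}, $P$ is continuous and strictly decreasing; $A$ is continuous, positive, and (weakly) increasing, and $\log B>0$ for $B>1$, so $\phi_B$ is continuous and strictly decreasing on $(1/d,\infty)$. Since $P(1)=0<A(1)\log B$, we have $\phi_B(1)<0$; and since the single-symbol sum $\sum_i |C[i]|^s$ diverges as $s\to 1/d^+$ by the lower bound $|C[i]|\geq K_1 i^{-d-\varepsilon}$, one also has $P(s)\to\infty$ and $\phi_B(s)\to\infty$ on that side. By the intermediate value theorem combined with strict monotonicity, $s_0$ is the unique zero of $\phi_B$ in $(1/d,1)$; the same reasoning, applied to $P_n$ (Lemma \ref{lem:convex}) and $P_M$, identifies $s_{0,n}$ and $s_0(M)$ as the unique zeros of $P_n-A\log B$ and $P_M-A\log B$ respectively.

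For the first convergence claim, fix small $\varepsilon>0$. By strict monotonicity, $\phi_B(s_0-\varepsilon)>0$ and $\phi_B(s_0+\varepsilon)<0$. By Proposition \ref{prop:press}, $P_n(s)\to P(s)$ pointwise on $(1/d,\infty)$, so for $n$ sufficiently large the same strict inequalities hold with $P_n$ in place of $P$, forcing the unique zero $s_{0,n}$ into $(s_0-\varepsilon,s_0+\varepsilon)$. Replacing $P_n$ by $P_M$ and invoking Proposition \ref{CA} yields the identical argument for $s_0(M)\to s_0$.

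For the dependence on $B$, $\phi_B(s)$ is jointly continuous in $(s,B)\in(1/d,\infty)\times(1,\infty)$ and, since $A(s)>0$, strictly decreasing in each variable. If $B_1<B_2$ then $\phi_{B_2}(s_0(B_1))=\phi_{B_1}(s_0(B_1))-A(s_0(B_1))\log(B_2/B_1)=-A(s_0(B_1))\log(B_2/B_1)<0$, and continuity combined with strict monotonicity forces $s_0(B_2)<s_0(B_1)$. Continuity in $B$ follows from a standard subsequence argument: any accumulation point of $s_0(B_k)$ as $B_k\to B$ must satisfy the zero equation $\phi_B=0$ by joint continuity, hence coincide with $s_0(B)$ by uniqueness; the accumulation point cannot be $1/d$ because of the blow-up $P(s)\to\infty$ at that endpoint. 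I expect the only delicate point to be this verification that $s_0$ stays bounded away from $1/d$, which is precisely what the $d$-decay lower bound on $|C[i]|$ secures.
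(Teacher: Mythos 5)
Your strategy---treating $s_0$ as the solution of $P(s)=A(s)\log B$, transferring to the approximants via pointwise convergence of $P_n$ and $P_M$ (Propositions~\ref{prop:press} and~\ref{CA}), and handling the $B$-dependence via joint continuity and monotonicity of $\phi_B$---is the same in spirit as the paper's (very terse) argument and supplies more detail. However, there is a concrete error exactly at the point you yourself flag as delicate. You claim that $\sum_i |C[i]|^s$ diverges as $s\to 1/d^+$ ``by the lower bound $|C[i]|\geq K_1 i^{-d-\varepsilon}$,'' but that bound gives $\sum_i |C[i]|^s \geq K_1^s\sum_i i^{-(d+\varepsilon)s}$, and for any $s>1/d$ one has $(d+\varepsilon)s>1+\varepsilon/d>1$, so the minorant converges. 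The lower bound on $|C[i]|$ decays \emph{faster} than $i^{-d}$, which is the wrong direction to force divergence of $Z_1$. The $d$-decaying hypothesis is in fact compatible with, say, $|C[i]|\asymp i^{-d}(\log i)^{-p}$ for $p>d$, in which case $Z_1(1/d)<\infty$, $P$ stays bounded near $1/d$, and $s_0=1/d$ for $B$ large---falsifying $\phi_B(s)\to\infty$ as $s\to 1/d^+$.

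The gap is repairable for the two convergence claims: for $\limsup s_{0,n}\leq s_0$, take any $s>s_0$, use $P(s)<A(s)\log B$ and $P_n(s)\to P(s)$ to get $s_{0,n}\leq s$ eventually; for $\liminf s_{0,n}\geq s_0$, either argue with $1/d<s<s_0$ when $s_0>1/d$, or observe that $s_{0,n}\geq 1/d=s_0$ holds automatically since $P_n$ is only defined on $(1/d,\infty)$. The identical repair works for $s_0(M)$. What does lean on the unjustified blow-up is your uniqueness/intermediate-value framing, the step ``the accumulation point cannot be $1/d$'' in the continuity argument, and the strict-decrease argument (which uses $\phi_{B_1}(s_0(B_1))=0$, i.e.\ that $s_0(B_1)$ is a genuine zero rather than the boundary $1/d$). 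These should either be supported by a separate argument that $P$ blows up at $1/d^+$ for the class of systems actually considered, or be restructured to treat the boundary case; the paper's own proof is no more careful here, but you should not ground the step on the $i^{-d-\varepsilon}$ lower bound, which points the wrong way.
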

 The function $s_0(B)$ is decreasing because $P(s)$ is decreasing in $s$ and $A(s)\log B$ is increasing in $B$. Continuity follows from the continuity and monotonicity of both $P(s)$ and $A(s)$.

\begin{pro}
As a function of  $B\in(0,\infty),$  $s_0$ satisfy
\[
\lim_{B \to \infty} s_0 = \frac{1}{d} \quad \text{and} \quad \lim_{B \to 1} s_0 = 1.
\]
\end{pro}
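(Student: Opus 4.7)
The plan is to exploit the monotonicity and continuity of both $P$ and $A$ together with the boundary behavior of $P$ listed in Proposition \ref{PPF} and the positivity of $A$ on $[1/d,\infty)$. First I will observe that, because $P$ is strictly decreasing in $s$ while $A(s)\log B$ is (weakly) increasing in $s$, the function $g(s,B):=P(s)-A(s)\log B$ is strictly decreasing in $s$. Hence the set $\{s:P(s)\le A(s)\log B\}$ is exactly an interval $[s_0,\infty)$ and $s_0$ is characterized by the equation $P(s_0)=A(s_0)\log B$ whenever a solution exists in $(1/d,\infty)$.

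For the limit as $B\to 1$ I would argue as follows. At $s=1$ we have $P(1)=0$ and $A(1)\log B>0$, so $g(1,B)<0$, which combined with the fact that $P(s)>0$ for $s<1$ and $A(s)\log B\to 0$ as $B\to 1$ gives, for any fixed $\varepsilon>0$,
\[
g(1-\varepsilon,B)=P(1-\varepsilon)-A(1-\varepsilon)\log B \longrightarrow P(1-\varepsilon)>0
\]
as $B\to 1^+$. Thus for $B$ sufficiently close to $1$ one has $s_0(B)>1-\varepsilon$, while $s_0(B)\le 1$ always, so $s_0\to 1$.

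For the limit as $B\to\infty$ the key observation is that $A(s)>0$ on $(1/d,\infty)$ (established in the lines preceding the main theorem, by direct inspection of the simplex minimization of $A(b,s)$), while $P(s)$ is finite for each $s>1/d$ by Proposition \ref{prop:press}. Consequently, for any fixed $s>1/d$,
\[
A(s)\log B\longrightarrow +\infty \quad\text{as } B\to\infty,
\]
so eventually $A(s)\log B\ge P(s)$, which forces $s_0(B)\le s$ for $B$ large. Combined with the universal lower bound $s_0\ge 1/d$ stated right after Theorem \ref{MT}, this yields $\lim_{B\to\infty}s_0 = 1/d$.

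I do not expect serious obstacles: the only mild subtlety is making sure that $A$ does not collapse to zero as $s\downarrow 1/d$, but a direct computation shows $A_m(b,1/d)=\tfrac{d-1}{d}b_m$, so $A(1/d)=\tfrac{d-1}{d}\min_{b\in S}\max_m b_m = \tfrac{d-1}{d\sum t_i}>0$, which is all that is needed. The rest is essentially monotonicity plus continuity of $P$ and $A$, both already recorded.
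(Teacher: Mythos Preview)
Your proposal is correct and follows essentially the same approach as the paper: finiteness of $P(s)$ together with positivity of $A(s)$ for the $B\to\infty$ limit, and $P(1)=0$ with $P$ strictly decreasing for the $B\to 1$ limit. Your version is slightly more detailed (making the bound $s_0\ge 1/d$ explicit), but the extra verification that $A(1/d)>0$ is not strictly needed since the argument only uses $A(s)>0$ for each fixed $s>1/d$.
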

\begin{proof}
 As $B\to\infty,$ for any fixed $s > 1/d$, by Proposition \ref{PPF} (ii) $P(s)$ is finite while $A(s) \log B \to \infty$. Thus, for sufficiently large $B$,  $P(s) \leq A(s) \log B.$ So $s_{0}\leq s$ for all $s<1/d,$ which implies $\lim_{B\to\infty}s_{0}=1/d.$

As $B \to 1$, we have  $A(s) \log B \to 0$. The condition in the infimum becomes $P(s) \leq 0$. By Proposition \ref{PPF}, (i) and (iv), $P(1)=0$ and $P(s)$ is strictly decreasing, so the unique solution of $P(s)=0$ is $s=1.$ Hence $s_{0}\to1$ as $B\to 1$
\end{proof}

\subsection{$s$-measures}

In the main body of the paper we will need the following construction. Fix some large $M\in\N$ and consider the restriction of the system to the maps $T_1,\ldots, T_M$. For any $s\in \R$ we will distribute a measure on the limit set in such a way that for every $n$ and every $\omega\in\Omega_{M}^{n}$ the measure $\mu(C[\omega])$ is roughly proportional to $|C[\omega]|^s$.

We will use the {\it conformal measures}, which in our case means probabilistic measures on the symbolic space $\Omega^{\N}_M =\{1,\ldots,M\}^\N$ such that for certain continuous potential $\phi$ we have
\[
\mu(TA) = \int_A e^{\phi(\omega)} \mu(d\omega)
\]
for every measurable set $A$ such that $TA$ is also measurable and $T|_A$ is invertible. In other words, the conformal measure is a measure with prescribed Jacobian. By \cite[Theorem 3.12]{DU}, for any $M$ and any continuous function $\varphi$ on $\Omega^{\N}_M$, there exists a $(P_M(\varphi)-\varphi)$-conformal measure on $\Omega^{\N}_M$.

We are going to apply this to the geometric potentials $\varphi_s(\omega) = -s\log |T'(\pi(\omega))|$ for $s\in [0,\infty)$. By the choice of $\varphi_s$, we have thus a probabilistic measure on $\Pi(\Omega^{\N}_M)$ such that for any $n$ and for any word $\omega\in \Omega_{M}^n$ we have
\[
e^{-n P_M(\varphi_s)}\cdot \min_{x\in C[\omega]}|(T^n)'(x)|^{-s} \leq \mu(C[\omega]) \leq e^{-n P_M(\varphi_s)}\cdot \max_{x\in C[\omega]}|(T^n)'(x)|^{-s} 
\]
We now remind that by Proposition \ref{prop:dist} we have Tempered Distortion. Thus,
\[
\frac {\max_{x\in C[\omega]}|(T^n)'(x)|^{-s}} {\min_{x\in C[\omega]}|(T^n)'(x)|^{-s}} < e^{\rho(n)}
\]
for certain sublinear function $\rho$. Thus,
\[
\max_{x\in C[\omega]}|(T^n)'(x)|^{-s} \leq e^{\rho(n)} \cdot |C[\omega]|^s
\]
and
\[
\min_{x\in C[\omega]}|(T^n)'(x)|^{-s} \geq e^{-\rho(n)} \cdot |C[\omega]|^s
\]
We obtained, for any $s\in [0,\infty)$ and $M$, a probabilistic measure $\mu_s$ satisfying
\begin{equation} \label{eqn:smeasure}
e^{-\rho(n)} \leq \frac {\mu_s(C[\omega])} {|C[\omega]|^s \cdot e^{-nP_M(s)}} \leq e^{\rho(n)}
\end{equation}
for certain sublinear function $\rho$, for all $n$ and all words $\omega\in \{1,\ldots,M\}^n$. We will call it {\it $s$-measure}.

\section{Proof of the Upper bound for $\dim_{H}E$ }\label{UBS}

This section is devoted to establishing  the upper bound for $\dim_{H}E\leq s_{0}.$

Fix $s\in (s_0,1)$ and 
choose small parameters $\delta, \varepsilon>0$. For large $n>0,$ we define  the set
\[
E_n = \left\{x\in [0,1]; \prod_{m=1}^k a_{n+i_m}^{t_m}(x) \geq B^n\right\}.
\]

Since  $E=\limsup\limits_{n\to\infty} E_n,$ it suffices to construct for each large $n$ a cover of $E_{n}$ and use those covers to estimate from above the $s$-dimensional Hausdorff measure of $E$. That is, if $E_n\subset \bigcup D_i$ and $\sum |D_i|^s<c\cdot\gamma^n$ for some $\gamma<1$, then this will prove $\dim_H E \leq s.$

Our cover will be actually a union of $k$ covers: 
$$\{D_i\} =\bigcup^{k}_{t=1} \{D_i^{(t)}\},$$ where each cover $\{D_i^{(t)}\}$ consisting of certain intervals $D[a_1,\ldots,a_{n+i_t}]:= D[a_1^{n+i_t}] ,$ chosen according to restrictions on specific digits.

\subsection{Construction of covers $\{D_{i}^{(t)}\}$ }
For a fixed $t \in \{1, \ldots, k\}$, the cover $\{{D}_i^{(t)}\}$ is defined by imposing restrictions on the ``special'' digits $a_{n+i_1}, \ldots, a_{n+i_t}$ as follows:

\begin{itemize}
\item[(i)] First, for all the positions $i\neq n+i_m$, (where $m=1,\ldots,t$) all the digits are allowed.\\
\item[(ii)] Second, at the positions $n+i_1, \ldots, n+i_{t}$ we put some digits such that 
\[
\prod_{m=1}^{t} a_{n+i_m}^{t_m} \leq B^n,
\]
with some restrictions we will describe later. For now, let us just denote by $F_t$ the set of possible $(a_{n+i_1},\ldots, a_{n+i_t})$ that are allowed for a given $t$.
\end{itemize}

The specific choice of $F_t$ is crucial. For now, assume it is chosen so that for every $x \in E_n$, the sequence $ (a_{n+i_1}(x), \ldots, a_{n+i_k}(x))$ is contained in $F_t$ for some $t \in \{1, \ldots, k\}$. This ensures $ \bigcup_{t=1}^k {D}_i^{(t)}$ is indeed a cover of $E_n$.

\subsection{Estimating the Size of a Covering Element}\

The size of each element in the cover $\{D_{i}^{t}\}$ is bounded by the following expression: 
\[
|D_i^{(t)}| \leq e^{\rho^{(\varepsilon)}(n+i_t)} \cdot \left(|C[a_1^{n+i_1-1}]| \cdot |C[a_{n+i_1+1}^{n+i_2-1}]| \cdot \ldots \cdot |C[a_{n+i_{t-1}+1}^{n+i_t-1}]| \cdot \prod_{m=1}^{t-1} a_{n+i_m}^{-d} \cdot a_{n+i_t}^{1-d}\right)^{1-\varepsilon}.
\]
To estimate $\sum |D_i^{(t)}|^s,$ we analyze the structure of the cylinders. The sum is taken over all cylinders in the family $\{D_{i}^{t} \},$ which corresponds to the sequences where the special digits lie in $F_{t}:$
$$   \sum |D_i^{(t)}|^s     =\sum_{\substack{ a_{1},\cdots,a_{n+i_{t}}\\(a_{n+i_{1}},\cdots,a_{n+i_{t}})\in F_{t}}} |D[a_{1}^{n+i_{t}}]|^s.      $$

We decompose this sum according to the positions of the special digits. First, observe that
\[
\sum_{a_{1},\cdots,a_{n+i_{1}-1}} |C[a_1^{n+i_1-1}]|^{s(1-\varepsilon)}  = Z_{n+i_1-1}(s(1-\varepsilon))
\]
which is, up to a multiplicative constant, equal to $Z_n(s(1-\varepsilon))$. Moreover, for the intermediate blocks between special digits, for any fixed prefix $a_{1},\cdots, a_{n+i_{m}},$ the sums
\[
\sum_{a_{n+i_{m}+1},\cdots,a_{n+i_{m+1}-1 }} |C[a_{n+i_m+1}^{n+i_{m+1}-1}]|^{s(1-\varepsilon)}
\]
are all bounded by a constant independent of $n.$ Therefore for distortion property, we can write
\[
\sum |D_i^{(t)}|^s \approx e^{\rho^{(\varepsilon)}(n)} \cdot Z_n(s(1_\varepsilon)) \cdot \sum_{(a_{n+i_{1}},\cdots,a_{n+i_{t}})\in F_t} \prod_{m=1}^{t-1} a_{n+i_m}^{-sd(1-\varepsilon)} \cdot a_{n+i_t}^{s(1-d)(1-\varepsilon)}.
\]

Next, we estimate the sum over the $F_{t}.$ For this, we will divide the possible values of $a_{n+i_m}$ into blocks $$(B^{n(\ell_m-1)\delta}, B^{n\ell_m\delta}]\quad \quad \text{ where } \ell_m\in \N.$$ Each such block has approximately $B^{n\ell_{m}\delta}$ integer elements, and the value of $a_{n+i_m}$ inside each block can vary at most by a factor of $B^{n\delta}$. Observe that there are at most $c/\delta$ blocks for every $m$ (that is, a number not depending on $n$). Let us denote by $G_t$ the set of possible $(\ell_1,\ldots,\ell_t)$ such that the corresponding product of blocks has nonempty intersection with $F_t$.

 We now estimate
\[
\sum_{(a_{n+i_{1}},\cdots,a_{n+i_{t}})\in F_t} \prod_{m=1}^{t-1} a_{n+i_m}^{-sd(1-\varepsilon)} \cdot a_{n+i_t}^{s(1-d)(1-\varepsilon)}
\]


\textbf{Step 1:} For a fixed $ (\ell_1,\dots,\ell_t) \in G_t .$\\
For each $m\leq t-1$, the number of integers in block \( \ell_m \) is \( \leq B^{n\ell_m\delta} \).
For each such integer \( a_{n+i_{m}} \) in block \( \ell_m \):\\
If \( m \leq t-1 \): \( a_{n+i_{m}}^{-sd(1-\varepsilon)} \leq \left(B^{n(\ell_m-1)\delta}\right)^{-sd(  1-\varepsilon   )} = B^{-n(\ell_m-1)\delta \cdot sd(1-\varepsilon)} \).\\
 If \( m = t \): \( a_{n+i_{t}}^{s(1-d)(1-\varepsilon)} \leq \left(B^{n(\ell_t-1)\delta}\right)^{s(1-d)(1-\varepsilon)} = B^{n(\ell_t-1)\delta\cdot s(1-d)(1- \varepsilon  )} \).\\
Therefore, for fixed \( (\ell_1,\dots,\ell_t) \), the contribution is:
\[
\leq \prod_{m=1}^{t-1} \left( B^{n\ell_m\delta} \right) \cdot \prod_{m=1}^{t-1} B^{-n(\ell_m-1)\delta \cdot sd(1-\varepsilon)} \cdot B^{n(\ell_t-1)\delta \cdot s(1-d)(1-\varepsilon)}.
\]\\
\textbf{Step 2:} Now sum over \( G_t \)\\
\begin{multline*}
  \sum_{(a_{n+i_{1}},\cdots,a_{n+i_{t}})\in F_t} \prod_{m=1}^{t-1} a_{n+i_m}^{-sd(1-\varepsilon)} \cdot a_{n+i_t}^{s(1-d)(1-\varepsilon)}         \leq 
   \sum_{(\ell_1,\dots,\ell_t) \in G_t} \left[ \prod_{m=1}^{t-1} B^{n\ell_m\delta} \cdot \prod_{m=1}^{t-1} B^{-n(\ell_m-1)\delta \cdot sd(1-\varepsilon)} \cdot B^{n(\ell_t-1)\delta\cdot s(1-d)(1-\varepsilon)} \right]\\
\leq
 \sum_{(\ell_1,\dots,\ell_t) \in G_t} \left[  B^{-n\delta[ \sum_{m=1}^{t-1} (\ell_{m}(sd-1))+s\ell_{t}(d-1) ] }\cdot  B^{n\delta[    \sum_{m=1}^{t-1} \varepsilon \ell_{m}sd+ \varepsilon\ell_{t}s(d-1) + \sum_{m=1}^{t-1}  sd( 1- \varepsilon   )+s(d-1) -s(d-1) \varepsilon        ] }    \right].
\end{multline*}
Note that since $\varepsilon$ is arbitrary small, the $\varepsilon$ and constant dependent part of the exponent in  $$B^{n\delta[    \sum_{m=1}^{t-1} \varepsilon \ell_{m}sd+ \varepsilon\ell_{t}s(d-1) + \sum_{m=1}^{t-1}  sd( 1- \varepsilon   )+s(d-1) -s(d-1) \varepsilon        ] }$$   is small compared to the main  term and is bounded by $B^{n(c_{1}\delta+c_{2}\epsilon)}$ for some constants $c_{1},c_{2}>0.$

Thus we can write up to factor $k(\delta,\varepsilon)\cdot B^{n(c_1\delta+c_2\varepsilon)},$ 
\begin{align*}
  \sum_{(a_{n+i_{1}},\cdots,a_{n+i_{t}})\in F_t} \prod_{m=1}^{t-1} a_{n+i_m}^{-sd(1-\varepsilon)} \cdot a_{n+i_t}^{s(1-d)(1-\varepsilon)}   \leq& \sum_{(\ell_1,\dots,\ell_t) \in G_t}  c(\delta,\varepsilon)\cdot B^{n(c_1\delta+c_2\varepsilon)} \left[  B^{-n\delta[ \sum_{m=1}^{t-1} (\ell_{m}(sd-1))+s\ell_{t}(d-1) ] }    \right]
\end{align*}

Step 3: Apply the maximum bound\\
Since \( \#G_t \leq c\delta^{-t} \), we have:
\begin{equation*}
\sum_{(a_{n+i_{1}},\cdots,a_{n+i_{t}})\in F_t} \prod_{m=1}^{t-1} a_{n+i_m}^{-sd(1-\varepsilon)} \cdot a_{n+i_t}^{s(1-d)(1-\varepsilon)}\leq c\delta^{-t} \cdot 
  k(\delta,\varepsilon)\cdot B^{n(c_1\delta+c_2\varepsilon)}           \cdot  \max_{G_t} 
 \sum_{(\ell_1,\dots,\ell_t) \in G_t}   \left[  B^{-n\delta A_{t} (\ell_{1},\cdots,\ell_{t},0,\cdots,0) } \right]
\end{equation*}

Therefore, 
\begin{align*}
\sum_{F_t} \prod_{m=1}^{t-1} a_{n+i_m}^{-sd(1-\varepsilon)} \cdot a_{n+i_t}^{s(1-d)(1-\varepsilon)}& \approx
k(\delta,\varepsilon)\cdot B^{n(c_1\delta+c_2\varepsilon)}
B^{-n \min_{G_t} A_t(\ell_1, \ldots, \ell_t, 0,\ldots,0)}.
\end{align*}

Now is the time to choose $G_t$. For any $b\in S$, there exists $t$ such that $A_t(b)\geq A$. Denoting by $\ell_m(b) ; m=1,\ldots,t$ the closest integer approximations of $b_m/\delta$, we define $G_t$ as the set of $(\ell_1(b),\ldots,\ell_t(b))$ for all $b\in S$ such that $A_t(b)\geq A$. We then define $F_t$ as the set of all $(a_{n+i_1},\ldots, a_{n+i_t})$ that belong to the blocks from $G_t$.

The penultimate thing to check is that we have indeed obtained a cover for $E_n$. Consider any sequence $(a_1(x),\ldots, a_{n+i_k}(x))$ for $x\in E_n$. We do not need to pay attention to $a_i; i\neq n+i_m$, but we need to check that the sequence $a_{n+i_1},\ldots, a_{n+i_k}$ is covered by $\bigcup_t F_t$. Let $r$ be the smallest such that $\prod_{m=1}^{r} a_{n+i_m}^{t_m} \geq B^n,$ we check that the sequence $a_{n+i_{1}},\cdots, a_{n+i_{r}}$ is covered by $F_{t}.$  Denote by $\ell_1, \ldots, \ell_{r-1}$ the boxes corresponding to $a_{n+i_1},\ldots,a_{n+i_{r-1}}.$ To complete the cover, we add the box 
\[
\ell_{r} = (1-\sum_{m=1}^{r-1} \ell_m t_m)/t_r
\]
and set $\ell_{r+1}=\ldots=\ell_k=0$. The choice ensures that the resulting point $b$ constructed by the sequence  $\ell_{1}=\ldots=\ell_k$   belongs to $S$. 

Now, we verify the time $t$ at which $A_{t}(b)\geq A.$

The smallest such time $t$ at which $A_t(b) \geq A$ is at most equal to $r$ (because $A_{r+1}(b),\ldots,A_k(b)$ are all not larger than $A_r(b)$). Thus we have $A_{t}(b)\leq A_{r}(b)$ for $t>r,$
confirming that the smallest index $t$ for which $A_{t}(b)\geq A$ is bounded by $r.$

Thus, by construction of of the sets $F_t,$ we conclude that $F_t$ contains the sequence $$a_1(x),\ldots, a'_{n+i_t}(x),$$
with
\begin{equation*}
\begin{cases}
a'_{n+i_t}(x)=a_{n+i_t}(x) \text{ if } t<r

 \\[2ex]
a'_{n+i_t}(x)\leq a_{n+i_t}(x) \text{ if } t=r
\end{cases}
\end{equation*}
 In both cases we have
\[
D[a_1(x)\ldots a'_{n+i_t}(x)] \supset C[a_1(x)\ldots a_{n+i_t}(x)]
\]
and thus our cover contains $x$.

Finally, we estimate the upper bound for the s-dimensional Hausdorff measure of $E$. By above, we have
\[
\sum |D_i^{(t)}|^s \leq B^{n(c_1 \delta + c_2 \varepsilon)} \cdot Z_{n+i_1-1}(s(1-\varepsilon)) \cdot B^{-nA}.
\]

Taking logarithms and dividing by $n:$

\[
\frac{1}{n}\log\sum |D_i^{(t)}|^s \leq P(s (1-\varepsilon)  )- (A+c_1 \delta + c_2 \varepsilon) \log B\]

Thus, if 
\begin{equation} \label{eqn:finish}
P(s   (1-\varepsilon)  ) < (A+c_1 \delta + c_2 \varepsilon) \log B.
\end{equation}
then $\sum |D_i^{(t)}|^s$ is exponentially small for large $n$. As we can choose $\delta$ and $\varepsilon$ arbitrarily small, and $s>s_0$, we can satisfy \eqref{eqn:finish}, and the proof is complete.

\section{Proof of the Lower bound for $\dim_H E$}\label{LBS}

To obtain the lower bound, we will construct an appropriate Cantor subset of set $E$ and the apply the following mass distribution principle \cite{Fa_14}.
\begin{pro}[Mass Distribution Principle]
Let $\mu$ be a probability ,measure supported on a measurable set $F$. Suppose there are positive constants $c$ and $r_0$ such that
$$\mu\big(B(x,r)\big)\le c r^s$$
for any ball $B(x,r)$ with radius $r\le r_0$ and center $x\in F$. Then $\dim_{H} F\ge s$.\end{pro}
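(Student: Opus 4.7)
The plan is to deduce $\dim_H F \geq s$ from the stronger assertion that the $s$-dimensional Hausdorff measure of $F$ is strictly positive, namely $\mathcal{H}^s(F) \geq 1/c$. This is the classical Frostman-type argument: the hypothesis says that $\mu$ cannot concentrate too heavily on small balls, so in any economical cover of $F$ the quantities $|U_i|^s$ must collectively account for the unit mass of $\mu$, forcing the $s$-content of the cover to be bounded below by a positive constant that is independent of the cover.

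To execute this, I would fix $\delta \in (0, r_0]$ and let $\{U_i\}_{i \in \N}$ be an arbitrary countable $\delta$-cover of $F$. After discarding the $U_i$ that do not meet $F$, for each remaining cover element I pick a base point $x_i \in U_i \cap F$. Since $|U_i| \leq \delta \leq r_0$, we have $U_i \subseteq B(x_i, |U_i|)$ with radius $|U_i|$ in the admissible range, so the mass-distribution hypothesis gives
\[
\mu(U_i) \leq \mu(B(x_i, |U_i|)) \leq c\,|U_i|^s.
\]
Because $F \subseteq \bigcup_i U_i$ and $\mu$ is a probability measure supported on $F$, summing yields
\[
1 = \mu(F) \leq \sum_i \mu(U_i) \leq c \sum_i |U_i|^s,
\]
and therefore $\sum_i |U_i|^s \geq 1/c$. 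Taking the infimum over all $\delta$-covers of $F$ gives $\mathcal{H}^s_\delta(F) \geq 1/c$, and since this bound is uniform in $\delta \leq r_0$, letting $\delta \searrow 0$ yields $\mathcal{H}^s(F) \geq 1/c > 0$. By the definition of Hausdorff dimension this forces $\dim_H F \geq s$.

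This is a textbook Frostman-type argument and I do not anticipate a genuine obstacle. The only bookkeeping points are that cover elements are allowed to be arbitrary sets rather than balls (which is why the enclosing-ball step and the choice of a base point $x_i \in U_i \cap F$ are needed), and one must verify that the enclosing ball has radius $|U_i|\leq r_0$ so that the mass hypothesis applies — both of which are immediate from restricting to $\delta$-covers with $\delta \leq r_0$.
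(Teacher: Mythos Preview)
Your proof is correct and is the standard Frostman argument. The paper does not actually prove this proposition; it merely states it with a citation to Falconer's textbook \cite{Fa_14} and then applies it, so there is nothing to compare against beyond noting that your argument is exactly the classical one found in that reference.
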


\subsection{Construction of a subset $F\subset E$}

Let $n_j\to\infty$ be a sparse sequence of integers. Let $s$ be such that $P(s)=A(s)\cdot \log B$. Fix $\mathbf{b}=(b_1,\ldots,b_k)\in S$ such that $A(\mathbf{b,s})=A(s)$. Fix small $\varepsilon$ and $\delta$. Fix a large $M$ and let $\nu_s$ be the corresponding $s$-measure.

We begin with a construction of a certain subset of $\Sigma$ (and the corresponding subset of $[0,1)$). Let $F\subset [0,1)$ be the set of points which symbolic expansions $a_i(x)$ satisfy, for every $x\in F$, the following conditions: 
\begin{itemize}
\item $a_i \in \{1,\ldots,M\}$ for $i\leq n_1+i_1-1$ and for $n_j+i_k+1\leq i \leq n_{j+1}+i_1-1$,
\item $a_i=1$ for $n_j+i_m+1\leq i \leq n_j+i_{m+1}-1$,
\item $B^{n_j b_m}+1 \leq a_i \leq 2B^{n_j b_m}$ for $i=n_j+i_m$.
\end{itemize}
Clearly, $F\subset E$.

\subsection{Construction of a measure $\mu$ supported on $F$}

To estimate from below $\dim_H E$ it is enough to estimate $\dim_H F$, and we will do that using the Mass Distribution Principle. To apply it we will introduce the following notation: given a (finite or infinite) sequence $\omega$, we denote $\omega_\ell^m := \omega_\ell, \omega_{\ell+1},\ldots,\omega_m$. In this notation, let $\mu$ be as follows:
\begin{itemize}
\item for $n\leq n_1+i_1-1$ we have
\[
\mu(C[\omega_1^n]) = \nu_s(C[\omega_1^n]),
\]
\item for $n\in [n_j+i_k+1, n_{j+1}+i_1-1]$ we have
\[
\mu(C[\omega_1^n]) = \mu(C[\omega_1^{n_j+i_k}]) \cdot \nu_s(C[\omega_{n_j+i_k+1}^n]),
\]
\item for $n=n_j+i_k$ $\mu(C[\omega_1^n])$ is nonzero only if $\omega_{n_j+i_1}^{n_j+i_k} = \ell_1 1^{i_2-i_1-1} \ell_2 1^{i_3-i_2-1} \ldots 1^{i_k-i_{k-1}-1} \ell_k$, with $B^{n_j b_m}+1 \leq \ell_m \leq 2B^{n_j b_m}$, and then it is equal to
\[
\mu(C[\omega_1^n]) = \mu(C[\omega_1^{n_j+i_1-1}]) \cdot \prod_{m=1}^k B^{-n_j b_m}.
\]
\end{itemize}
That is, for symbols in the range $n_j+i_1,\ldots, n_j+i_k$ we uniformly distribute the measure according to the choice of the set $F$, while for symbols between positions $n_j+i_k+1$ and $n_{j+1}+i_1-1$, including the initial segment from 1 to $n_1+i_1-1$, we use the $s$-measure $\nu_s$. Clearly, $\mu$ is a probabilistic measure supported on $F$.

\subsection{Calculating the lower local dimension of $\mu$}

The last step in the proof is to calculate the lower local dimension of measure $\mu$ at any point in $F$. Fix $x\in F$ and let $\omega$ be its symbolic expansion. We need to estimate from below
\begin{equation} \label{eqn:dmux}
\underline{d}\mu(x) := \liminf_{r\to 0} \frac {\log \mu(B_r(x))} {\log r}
\end{equation}
and to prove it has a lower bound, which can be made arbitrarily close to $s$ by taking larger $M$ and sparser $(n_j)_{j\geq 1}$.

To simplify the notation let $r_n=|C[(\omega_1^n)]|$ and note that we only need to check the right hand side of \eqref{eqn:dmux} for a subsequence of $r$'s with bounded multiplicative gaps. We will do the calculation by considering several cases, and note that the order of estimation is: Case 1, Case 2 for $j=1$, Case 3 for $j=1$, Case 2 for $j=2$, Case 3 for $j=2$, and so on.

Case 1: $r=r_n, n\leq n_1+i_1-1$.

In this case we have
\[
\mu(B_{r_n}(x)) \approx \mu(C[\omega_1^n]) \leq r_n^s \cdot e^{-nP_M(s)} \cdot e^{\rho(n)},
\]
and by Proposition \ref{CA} we have $P_M(s)\geq P(s)-\epsilon> 0$ for $M$ large enough and $\rho(n)/n\to 0$ (and hence can be made arbitrarily small around $n_1$ if $n_1$ is large), we have
\[
\mu(B_r(x)) \leq Kr^s
\]
for some uniform constant $K$.

Case 2: $r_{n_j+i_k}\leq r\leq r_{n_j+i_1-1}$.

From Case 1 (and maybe Case 3) we know that $\mu(C[\omega_1^{n_j+i_1-1}])$ is approximately equal to $r_{n_j+i_1-1}^s e^{-(n_j+i_1-1)P(s)}$ modulo error terms $e^{\rho(n_j+i_1-1)}$ and $e^{(n_j+i_1-1)(P(s)-P_M(s))}$, which are small for large $M$ and large $n_j$. We can thus write
\[
\mu(C[\omega_1^{n_j+i_1-1}]) \leq r_{n_j+i_1-1}^{s(1-\varepsilon)} e^{-(n_j+i_1-1)P(s)}.
\]
Moreover, the distortion
\[
\Dist(T_{\omega_{n_j+i_1-1}}\circ \ldots \circ T_{\omega_1}) \leq r_{n_j+i_1-1}^{-\varepsilon} \cdot e^{\rho^{(\varepsilon)}(n_j+i_1-1)}
\]
and for large $n_j$ the right hand side can be estimated by $r_{n_j+i_1-1}^{-2\varepsilon}$.

Observe that the maps $T_{\omega_{n_j+i_m-1}}\circ \ldots \circ T_{\omega_{n_j+i_{m-1}+1}}=T_1^{i_m-i_{m-1}-1}$ are iterations of uniformly bounded number of copies of the same map $T_1$, hence they have uniformly bounded derivative and uniformly bounded distortion. We are thus going to ignore those ranges of $r$ and concentrate on the meaningful areas, with $r$ related to $r_{n_j+i_m}$. Let us start with $m=1$. In this area the measure $m$ is distributed the following way: we have $B^{n_j b_1}$ cylinders, each of roughly the same size, and positioned next to each other. Thus, to get the minimal right hand side of \eqref{eqn:dmux}, we need to look at 
\[
r=\left|\bigcup_{w=B^{n_j b_1}+1}^{2B^{n_j b_1}} C[\omega_1^{n_j+i_1-1} w]\right|
\]
and we have
\[
\mu(B_r(x)) = \mu(C[\omega_1^{n_j+i_1-1}]).
\]
Estimating 
\[
r \geq r_{n_j+i_1-1} \cdot B^{n_j b_1} \cdot K^{-1}(\delta)B^{-(d+\delta) n_j b_1} \cdot r_{n_j+i_1-1}^{2\varepsilon}
\]
we can write
\[
\frac {\log \mu(B_r(x))} {\log r} \geq \frac {s(1-\varepsilon)\log r_{n_j+i_1-1} - (n_j+i_1-1)P(s)} {(1+2\varepsilon)\log r_{n_j+i_1-1}  -(d-1+\delta) n_j b_1 \log B}. 
\]

Indeed, as $\log r_{n_j+i_1-1} < 0$, we have
\[
\frac {\log \mu(B_r(x))} {\log r} \geq \frac {s(1-\varepsilon) |\log r_{n_j+i_1-1}| + (n_j+i_1-1)P(s)} {(1+2\varepsilon) |\log_{r_{n_j}+i_1-1}| + (d-1+\delta)n_j b_1 \log B} = \frac {J_1 + J_2} {J_3 + J_4},
\]
with $J_1, J_2, J_3, J_4 >0$. We will apply the obvious inequality
\[
\frac {J_1 + J_2} {J_3+J_4} \geq \min\left( \frac {J_1} {J_3}, \frac {J_2} {J_4}\right).
\]
and so, as
\[
\frac {s(1-\varepsilon) |\log r_{n_j+i_1-1}|} {(1+2\varepsilon) |\log_{r_{n_j}+i_1-1}|} = s\cdot \frac {(1-\varepsilon)} {1+2\varepsilon} = s\cdot (1+O(\varepsilon)) 
\]
and as $P(s) = A(s) \log B \geq A_1(\mathbf{b},s) \log B$ so  $J_{2}/J_{4}$ becomes, 
\[
\frac { (n_j+i_1-1)P(s)}{ (d-1+\delta)n_j b_1 \log B}\geq \frac { (n_j+i_1-1)(d-1) s b_1\log B} { (d-1+\delta)n_j b_1 \log B}= s\cdot \frac {n_j+i_1-1}{n_j} \cdot \frac{d-1} {d-1+\delta} =s\cdot (1+O(\delta) + O(1/n_j)),
\]
we get
\[
\frac {\log \mu(B_r(x))} {\log r} \geq s \cdot (1+O(\varepsilon) + O(\delta) + O(1/n_j)).
\]
Therefore, the right hand side is at least $s+O(\varepsilon, \delta, n_j^{-1}).$

The case for larger $m$ is similar. We have approximately $B^{n_j b_m}$ cylinders next to each other, each of size approximately $r_{n_j+i_1-1} \cdot B^{-d n_j (b_1+\ldots +b_m)}$ (up to distortion) and of measure $B^{-n_j (b_1+\ldots +b_m)}\cdot \mu(C[\omega_1^{n_j+i_1-1}])$, covering them all with one ball we get
\[
\frac {\log \mu(B_r(x))} {\log r} \geq {s\log r_{n_j+i_1-1} - n_j P(s) + O(n_j \varepsilon+1)}  {\log r_{n_j+i_1-1}  -n_j s^{-1} A_m(\mathbf{b},s) \log B + O(n_j\varepsilon + n_j \delta+1)}
\]
and as $A_m(\mathbf{b},s) \log B \leq P(s)$, the right hand side is again at least $s+O(\varepsilon, \delta, n_j^{-1})$.

Case 3: $r_{n_{j+1}+i_1-1}\leq r\leq r_{n_j+i_k+1}$.

This case is similar to case 1. We have the initial cylinder $C[\omega_1^{n_j+i_k}])$ of measure $\nu_s(C[\omega_1^{n_1+i_1-1}]) \cdot \ldots \cdot \nu_s(C[\omega_{n_{j-1}+i_k+1}^{n_j+i_1-1}]) \cdot B^{-(n_1+\ldots +n_j)(b_1+\ldots+b_k)}$, and we subdivide the measure inside it according to measure $\nu_s$. We remind from case 1 that
\[
\nu_s(B_r(y)) \leq K r^s
\]
for any $r$ and any $y$ (provided $M$ is large enough), and hence as the distortion of $T^{n_j+i_k}|_{C[\omega_1^{n_j+i_k}]}$ is bounded from above by $r_{n_j+i_k}^{-2\varepsilon}$, we have
\[
\mu(B_r(x)) \leq \mu(B_{r_{n_j+i_k}}(x))^{1-2\varepsilon} \cdot \left(\frac{r} {r_{n_j+i_k}}\right)^s \cdot K. 
\]
Thus,
\[
\frac {\log \mu(B_r(x))} {\log r} \geq \min((1-2\varepsilon)\frac{\log \mu(B_{r_{n_j+i_k}}(x))} {\log r_{n_j+i_k}},s) - O(n_j^{-1}).
\]
Observe that, as $\mathbf{b}$ is fixed and known beforehand, we can choose $n_{j+1}$ sufficiently large that for the interval $C[\omega_1^{n_{j+1}+i_1-1}]$ the measure $\mu(C[\omega_1^{n_{j+1}+i_1-1}])$ is approximately equal to $r_{n_{j+1}+i_1-1}^s e^{-(n_{j+1}+i_1-1)P(s)}$ modulo error terms of order at most $r_{n_{j+1}+i_1-1}^{-2\varepsilon}$, which allows us to proceed to the Case 2 for $j+1$. This ends the proof.

\providecommand{\bysame}{\leavevmode\hbox to3em{\hrulefill}\thinspace}
\providecommand{\MR}{\relax\ifhmode\unskip\space\fi MR }
\providecommand{\MRhref}[2]{%
  \href{http://www.ams.org/mathscinet-getitem?mr=#1}{#2}
}
\providecommand{\href}[2]{#2}

\renewcommand{\bibname}{References}
	\bibliographystyle{abbrv}

\end{document} 

